\DeclareSymbolFont{AMSb}{U}{msb}{m}{n}
\DeclareMathSymbol{\Z}{\mathbin}{AMSb}{"5A}
\DeclareMathSymbol{\R}{\mathbin}{AMSb}{"52}
\DeclareMathSymbol{\N}{\mathbin}{AMSb}{"4E}
\DeclareMathSymbol{\Q}{\mathbin}{AMSb}{"51}
\DeclareMathSymbol{\F}{\mathbin}{AMSb}{"46}
\DeclareMathSymbol{\C}{\mathbin}{AMSb}{"43}
\newcommand{\avec}{\mathbf{a}}
\newcommand{\fvec}{\mathbf{f}}
\newtheorem{thm}{Theorem}[section]
\newtheorem{cor}[thm]{Corollary}
\newtheorem{prop}[thm]{Proposition}
\newtheorem{fact}[thm]{Fact}
\newtheorem{quest}[thm]{Question}
\theoremstyle{definition}
\newtheorem{definition}[thm]{Definition}
\theoremstyle{remark}
\newtheorem{remark}[thm]{Remark}
\theoremstyle{remark}
\theoremstyle{remark}
\newtheorem{claim}[thm]{Claim}
\theoremstyle{remark}
\theoremstyle{remark}
\theoremstyle{remark}
\begin{document}
\bibliographystyle{amsplain}

\title[Periodic behavior in semigroups]{Periodic behavior in families of numerical and affine semigroups via parametric Presburger arithmetic}
\author{Tristram Bogart, John Goodrick, and Kevin Woods}

\begin{abstract}
Let $f_1(n), \ldots, f_k(n)$ be polynomial functions of $n$. For fixed $n\in\N$, let $S_n\subseteq \N$ be the numerical semigroup generated by  $f_1(n),\ldots,f_k(n)$. As $n$ varies, we show that many invariants of $S_n$ are eventually quasi-polynomial in $n$, such as the Frobenius number, the type, the genus, and the size of the $\Delta$-set. The tool we use is expressibility in the logical system of parametric Presburger arithmetic. Generalizing to higher dimensional families of semigroups, we also examine affine semigroups $S_n\subseteq \N^m$ generated be vectors whose coordinates are polynomial functions of $n$, and we prove similar results; for example, the Betti numbers are eventually quasi-polynomial functions of $n$.
\end{abstract}

\maketitle
\section{Introduction}

In this note we will describe a general context for studying the periodic behavior of formulas for certain invariants of \emph{affine semigroups}, that is, subsets of $\N^m$ that are closed under addition and contain $\textbf{0}$.
In the case when $m =1$ and $\N\setminus S$ is finite, $S$ is known as a \emph{numerical semigroup}.

Numerical (and affine) semigroups can be specified by a set of generators: given $a_1,\ldots,a_k\in\N$, let $\langle a_1,\ldots,a_k\rangle=\{\lambda_1a_1+\cdots+\lambda_ka_k:\ \lambda_i\in\Z_{\ge 0}\}$.
Several recent papers have studied the behavior of \emph{shifted families} of numerical semigroups, that is, families of semigroups $S_n = \langle a_1 + n, a_2 + n, \ldots, a_k +n \rangle$ where $a_1, \ldots, a_k \in \N$ are held constant and $n$ varies over $\N$ (e.g. \cite{CKLNZ} and \cite{CGHOP}). One would like to find nice formulas in terms of $n$ for important quantities associated with $S_n$, such as the Frobenius number, the genus, the type, and the Betti numbers (see below for definitions), among others. Kerstetter and O'Neill \cite{KO} proposed the much more general problem of studying \emph{parametric families} of numerical semigroups $S_n = \langle f_1(n), \ldots, f_k(n)\rangle$ where the $f_i(n):\N\rightarrow\N$ are arbitrary polynomials; we will call these \emph{parametric numerical semigroups}. In the case where the polynomials $f_i(n)$ are all linear, they showed that the first Betti number is eventually periodic. In the case of parametric numerical semigroups of arbitrary degree, they conjecture that the Betti numbers satisfy the following condition:

\begin{definition}
A function $f: \N \rightarrow \Z$ is \emph{eventually quasi-polynomial of degree $d$} (or ``EQP'') if there are eventually periodic functions $c_0, c_1, \ldots, c_d : \N \rightarrow \Q$ such that for all $n \in \N$, $$f(n) = c_d(n) n^d + c_{d-1}(n) n^{d-1} + \ldots + c_0(n)$$ and for infinitely many values of $n$, $c_d(n) \neq 0$.
\end{definition}

We prove that conjecture. In fact, using a general result from \cite{BGW2017} about families definable in parametric Presburger arithmetic (as explained below), we can show that many invariants of parametric numerical semigroups are EQP. Each invariant is defined in Section 3 or 4; see also the recent books on numerical semigroup theory by Assi and Garc\'ia S\'anchez (\cite{Assi_GS}) and Rosales and Garc\'ia S\'anchez (\cite{Rosales_GS}).

\begin{thm}
\label{main_numerical}
Let $\{S_n \, : \, n \in \N\}$ be a parametric family of sub-semigroups of $\N$ such that $S_n = \langle f_1(n), \ldots, f_k(n) \rangle$, where the $f_i : \N \rightarrow \N$ are polynomial functions with integer coefficients.

\begin{enumerate}
\item The following sets are all eventually periodic:
\begin{enumerate}
\item The set of all $n \in \N$ for which $S_n$ is a numerical semigroup (that is, such that $|\N \setminus S | < \infty$).
\item The set of all $n \in \N$ for which $S_n$ is a symmetric numerical semigroup.
\item The set of all $n \in \N$ for which $S_n$ is an irreducible numerical semigroup.
\end{enumerate}

\item The following quantities, whenever finite, are eventually quasi-polynomial:
\begin{enumerate}
\item The \emph{genus} $\textup{g}(S_n)$, defined as $|\N \setminus S_n|$.
\item The \emph{Frobenius number} $\textup{F}(S_n)$, that is, the maximal element of $\N \setminus S_n$.
\item The \emph{type} of $S_n$.
\item The cardinality $|\textup{FG}(S_n)|$ of the set of fundamental gaps of $S_n$. 
\item The cardinality of the \emph{delta set} $\Delta(S_n)$.
\item For any fixed $i$, the value of the $i$-th element of the Ap\'ery set of $S_n$ as listed in increasing order.
\end{enumerate}

\item Let $a: \N \rightarrow \N$ be given by a polynomial function. Then the following quantities are all EQP:

\begin{enumerate}
\item The cardinality of the length set $\textup{L}_{S_n}(a(n))$ containing all possible factorizations of $a(n)$ in $S_n$. (In case $a(n) \notin S_n$ we consider $\textup{L}_{S_n}(a(n))$ to be empty.)
\item The cardinality of the delta set $\Delta_{S_n}(a(n))$ consisting of all differences between successive elements in the length set  $\textup{L}_{S_n}(a(n))$ (which we take to be the empty set in case  $|\textup{L}_{S_n}(a(n))| \leq 1$).
\end{enumerate}

\end{enumerate}

\end{thm}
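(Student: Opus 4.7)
The plan is to translate each invariant in the theorem into a formula of parametric Presburger arithmetic (PPA) with parameter $n$, and then invoke the general results of \cite{BGW2017}. Those results say, roughly, that for any family $\{X_n\}_{n\in\N}$ of subsets of $\Z^d$ defined uniformly by a PPA-formula: (a) the set of $n$ for which $X_n$ satisfies any given PPA-property is eventually periodic; (b) the counting function $n \mapsto |X_n|$ is EQP whenever $X_n$ is finite for all sufficiently large $n$; and (c) extremal functions such as $n \mapsto \max X_n$ are EQP whenever $X_n$ is bounded for all large $n$.

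The key first step is to observe that the family $S_n$ is itself uniformly PPA-definable: writing each $f_i(n)$ as a polynomial in $n$ with integer coefficients, the predicate $x \in S_n$ is defined by $\exists \lambda_1, \ldots, \lambda_k \in \N\, (x = \lambda_1 f_1(n) + \cdots + \lambda_k f_k(n))$. From this all of part (1) follows by direct translation: $|\N \setminus S_n| < \infty$ is expressed by $\exists N\, \forall x\, (x > N \to x \in S_n)$, giving (1a); symmetry amounts to ``$F(S_n)$ exists and for all $z$ exactly one of $z$ and $F(S_n) - z$ lies in $S_n$'', which is PPA once the Frobenius number is PPA-definable as the max of $\N \setminus S_n$; and irreducibility has the standard first-order characterization as maximality among numerical semigroups with a fixed Frobenius number. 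The Frobenius number and genus (2a), (2b) are then instances of (c) and (b) applied to $\N \setminus S_n$.

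For the remaining invariants in (2) and (3) one writes their standard definitions in PPA. The $i$-th element of $\mathrm{Ap}(S_n, s)$ in increasing order, where $s$ is the least nonzero generator (a polynomial in $n$), is characterized by ``$x \in S_n$, $x - s \notin S_n$, and there are exactly $i-1$ smaller elements $y$ with the same property''. The type (the cardinality of the set of pseudo-Frobenius numbers $\{x \notin S_n : x + t \in S_n \text{ for all } t \in S_n \setminus \{0\}\}$), the set $\mathrm{FG}(S_n)$ of fundamental gaps, and the delta set $\Delta(S_n)$ each have standard PPA-expressible definitions as finite subsets of $\N$, so their cardinalities are EQP by (b). For part (3), since $a(n)$ is a polynomial in $n$, the length set $\mathrm{L}_{S_n}(a(n)) = \{\lambda_1 + \cdots + \lambda_k : \lambda_i \in \N,\ \sum_i \lambda_i f_i(n) = a(n)\}$ is the image under an integer linear map of a PPA-definable subset of $\N^k$, hence is itself PPA-definable; its cardinality is EQP by (b), and the delta set $\Delta_{S_n}(a(n))$ is defined from $\mathrm{L}_{S_n}(a(n))$ by a short PPA formula for ``consecutive differences''.

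The main obstacle is verifying in each case that the family really fits into the PPA framework and that the relevant quantity is finite for all sufficiently large $n$, so that the counting theorem applies. In particular one must show that once $S_n$ is a numerical semigroup the type, the number of fundamental gaps, and both delta sets are automatically finite, and then apply (b) on the (eventually periodic, by (1a)) set of $n$ where $S_n$ is numerical. A slightly subtler point is (3b): the predicate ``$d \in \Delta_{S_n}(a(n))$'' unfolds to $\exists \ell\, (\ell \in L \wedge \ell + d \in L \wedge \forall \ell'\, (\ell < \ell' < \ell + d \to \ell' \notin L))$, which nests additional quantifiers beneath the existential definition of $L$; one must confirm that the composite formula remains first-order in PPA. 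Once all of these bookkeeping checks are done, every conclusion of the theorem is an immediate application of one of (a)--(c).
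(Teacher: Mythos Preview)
Your proposal is correct and follows essentially the same strategy as the paper: define $S_n$ and each invariant by a parametric Presburger formula and then apply the main theorem of \cite{BGW2017}. Two small points where the paper is more careful than your sketch: for (1c) the phrase ``maximality among numerical semigroups with a fixed Frobenius number'' is, taken literally, a second-order statement (quantifying over semigroups), and the paper instead invokes the explicit first-order characterization that $S$ is irreducible iff it is symmetric (when $\textup{F}(S)$ is odd) or pseudo-symmetric (when $\textup{F}(S)$ is even); and for the Ap\'ery set, the multiplicity $m(S_n)=\min_i f_i(n)$ is not a single polynomial in $n$ (which $f_i$ is smallest may depend on $n$), so the paper defines it by the PPA formula ``$x\in S_n\wedge x>0\wedge\forall y[(y\in S_n\wedge y>0)\rightarrow x\le y]$'' rather than treating it as a given polynomial.
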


The EQP behavior of the Frobenius number $\textup{F}(S_n)$ and of the genus $\textup{g}(S_n)$ were established previously by Bobby Shen (see \cite{Shen15a}), but we include it on this list above for completeness. As far as we know, most of the other EQP and eventually periodic phenomena in the theorem above (such as the Betti numbers) have not been noted previously. 

\begin{remark}
This list of EQP behavior is far from exhaustive. The parametric Presburger arithmetic tools that we describe in the next section are quite general and could be used to prove other EQP behavior in numerical semigroups (the cardinality of the set of special gaps, the minimum pseudo-Frobenius number, and so on).
\end{remark}

\begin{remark}
While Theorem \ref{main_numerical} lists EQP behavior for several \emph{numerical} invariants of $S_n$, there are also a number of important \emph{sets} associated with $S_n$, such as $S_n$ itself, $\N\setminus S_n$, the set of  pseudo-Frobenius numbers, the fundamental gaps, the delta set, the Ap\'ery set, and the length sets. It is harder to discuss the ``periodic'' behavior of the sets themselves (rather than only concentrating on the cardinality, the smallest element, or such). However, by encoding sets $T_n$ as generating functions $g_n(x)=\sum_{t\in T_n}x^t$, the tools we will use here similarly yield a form of periodic behavior for these generating functions $g_n$, as $n$ varies; we don't discuss generating functions further herein, but see \cite{BGW2017} for more details.
\end{remark}

One may also consider the even more general context of \emph{parametric affine semigroups}: given polynomial functions $\textbf{f}_1(n), \ldots, \textbf{f}_k(n) : \N \rightarrow \N^m$, let $S_n$ be the sub-semigroup of $\N^m$ generated by $\textbf{f}_1(n), \ldots, \textbf{f}_k(n)$. Let $K$ be a field. For each value of $n$ we can consider a minimal graded free resolution of the semigroup algebra $K[S_n]$. By definition $K[S_n]$ is a monomial algebra and thus the kernel $I_{S_n}$ of the first map in the resolution is a binomial prime ideal, or \emph{toric ideal}. Such ideals have applications in integer programming and geometric combinatorics \cite{Sturmfels} as well as algebraic statistics \cite[\S 9]{Sullivant}. The $i$th Betti number $\beta_i$ of $K[S_n]$ counts the number of $i$th syzygies of $I_{S_n}$, that is, the size of any minimal generating set of the $i$th kernel map in the minimal free resolution. In particular, the first Betti number $\beta_1$ is the size of a minimal generating set for the ideal $I_{S_n}$.

\begin{thm}
\label{main_affine}
Let $S_n = \langle \textbf{f}_1(n), \ldots, \textbf{f}_k(n)\rangle \subseteq \N^m$ be a parametric affine semigroup. Then for each $i$ and every field $K$, the $i$-th Betti number of the semigroup algebra $K[S_n]$ is eventually quasi-polynomial.
\end{thm}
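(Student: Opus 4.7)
My plan is to reduce the problem to counting PPA-definable families of finite sets and then invoke the main counting theorem of \cite{BGW2017}. I would start from the classical combinatorial formula for the multigraded Betti numbers of an affine semigroup algebra: for a semigroup $S \subseteq \N^m$ generated by $a_1, \ldots, a_k$ and any field $K$,
$$\beta_{i,s}(K[S]) = \dim_K \tilde{H}_{i-1}(\Delta_s; K),$$
where $\Delta_s$ is the simplicial complex on vertex set $\{1, \ldots, k\}$ whose faces are the subsets $F$ with $s - \sum_{j \in F} a_j \in S$ (see, for instance, Miller and Sturmfels' \emph{Combinatorial Commutative Algebra}, Theorem~9.2, or related material in \cite{Sturmfels}). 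Summing $\beta_{i,s}$ over $s \in \N^m$ yields the total Betti number $\beta_i(K[S])$, a finite nonnegative integer.

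Specializing to $S = S_n$ with generators $\mathbf{f}_1(n), \ldots, \mathbf{f}_k(n)$, I write $\Delta_s^{(n)}$ for the resulting complex. Because $k$ is fixed, only finitely many simplicial complexes on $\{1, \ldots, k\}$ can ever appear; enumerate them as $\Delta^{(1)}, \ldots, \Delta^{(N)}$, set $h_l = \dim_K \tilde{H}_{i-1}(\Delta^{(l)}; K)$, and partition by
$$A_l(n) = \{s \in \N^m : \Delta_s^{(n)} = \Delta^{(l)}\}.$$
The formula above gives
$$\beta_i(K[S_n]) = \sum_{l=1}^N h_l \cdot |A_l(n)|,$$
so it suffices to show each $|A_l(n)|$ with $h_l > 0$ is EQP in $n$.

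For such $l$, the set $A_l(n)$ is finite: each of its elements contributes at least $h_l \geq 1$ to the (finite) total $\beta_i(K[S_n])$. Definability of $A_l(n)$ in parametric Presburger arithmetic follows from that of the semigroup-membership relation: using
$$t \in S_n \iff \exists \lambda_1, \ldots, \lambda_k \in \N, \;\; t = \sum_{j=1}^k \lambda_j \mathbf{f}_j(n),$$
the condition $s \in A_l(n)$ becomes
$$\bigwedge_{F \in \Delta^{(l)}} \Big(s - \sum_{j \in F} \mathbf{f}_j(n) \in S_n\Big) \;\wedge\; \bigwedge_{F \notin \Delta^{(l)}} \Big(s - \sum_{j \in F} \mathbf{f}_j(n) \notin S_n\Big),$$
whose $n$-dependence is entirely through the polynomial coefficients $\mathbf{f}_j(n)$. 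By the main counting theorem of \cite{BGW2017}, each $|A_l(n)|$ is EQP, and therefore $\beta_i(K[S_n])$---being a nonnegative integer linear combination of finitely many EQP functions---is itself EQP.

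With \cite{BGW2017} as a black box the argument is almost entirely formal. The point requiring care is the uniform PPA-definability of the complexes $\Delta_s^{(n)}$, which reduces to PPA-definability of membership in $S_n$ and holds because each $\mathbf{f}_j$ is polynomial in $n$. The only other subtlety is the finiteness of $A_l(n)$ whenever $h_l > 0$, and this is immediate from the finiteness of the total Betti number $\beta_i(K[S_n])$ for each fixed $n$.
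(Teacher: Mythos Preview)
Your proof is correct and follows essentially the same approach as the paper: both invoke the Bruns--Herzog formula $\beta_{i,\lambda}=\dim_K\widetilde{H}_{i-1}(\Delta_\lambda;K)$, enumerate the finitely many simplicial complexes on $\{1,\ldots,k\}$, observe that the set of $\lambda$ realizing a given complex is PPA-definable via a conjunction of (non)membership conditions in $S_n$, and then apply the counting theorem of \cite{BGW2017} to the finitely many strata with nonzero reduced homology. The only cosmetic difference is that the paper groups the complexes by the value $\ell$ of the relevant Betti contribution before summing, whereas you sum $h_l\cdot|A_l(n)|$ directly.
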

As we will briefly explain at the beginning of Section 4, the following is an immediate consequence. 
\begin{cor} \label{presentation}
The cardinality of a minimal presentation of a parametric affine semigroup $S_n$ is eventually quasi-polynomial. 
\end{cor}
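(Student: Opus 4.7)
The plan is to show that the cardinality of a minimal presentation of $S_n$ agrees with the first Betti number $\beta_1(K[S_n])$ and then quote Theorem \ref{main_affine}. Concretely, fix the ordered generating tuple $(\textbf{f}_1(n),\ldots,\textbf{f}_k(n))$ coming from the definition of the parametric family, and let $\varphi_n : \N^k \to S_n$ be the surjection sending the $i$-th standard basis vector to $\textbf{f}_i(n)$. A \emph{presentation} of $S_n$ is a set of relations $R \subseteq \N^k \times \N^k$ whose induced congruence equals $\ker \varphi_n$, and it is \emph{minimal} when no proper subset has the same property. The cardinality of any two minimal presentations of $S_n$ is the same, so this is a well-defined invariant of the family.

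Next I would recall the standard bridge between this combinatorial invariant and commutative algebra: for any field $K$, if $I_{S_n} \subseteq K[x_1,\ldots,x_k]$ is the toric ideal $\ker(x_i \mapsto \chi^{\textbf{f}_i(n)})$, then a minimal set of binomial generators of $I_{S_n}$ is in bijection with a minimal presentation of $S_n$, and in particular the cardinality of such a minimal generating set, namely $\beta_1(K[S_n])$, coincides with the cardinality of a minimal presentation of $S_n$. This is classical (see Rosales--Garc\'ia S\'anchez \cite{Rosales_GS}, or Chapter~7 of Miller--Sturmfels), and in particular the number is independent of $K$.

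Given this identification, the corollary is instantaneous: Theorem \ref{main_affine}, applied with $i = 1$ and any fixed field $K$, asserts that $\beta_1(K[S_n])$ is eventually quasi-polynomial in $n$, so the cardinality of a minimal presentation of $S_n$ is EQP. There is no genuine obstacle; the only thing to be careful about is verifying, or citing explicitly, the equality between the combinatorial count of minimal presentations and $\beta_1$, since the theorem is phrased algebraically while the corollary is phrased combinatorially.
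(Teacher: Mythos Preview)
Your proposal is correct and follows essentially the same approach as the paper: the paper likewise observes that the cardinality of a minimal presentation of $S_n$ equals the first Betti number $\beta_1(K[S_n])$ (independently of $K$) and then deduces the corollary immediately from Theorem~\ref{main_affine} with $i=1$.
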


All of the results above will follow from the main theorem of our previous paper \cite{BGW2017} relating eventually periodic and EQP behavior to expressibility in a logical framework known as \emph{parametric Presburger arithmetic}, which we will lay out in Section 2. In Section 3 we will at once define the various invariants that appear in Theorem~\ref{main_numerical} and prove the theorem. In Section 4 we will combine our technique with a homological interpretation (see \cite{BrunsHerzog}) of the Betti numbers of semigroup algebras in order to prove Theorem~\ref{main_affine}. In Section 5, we end with some discussion and questions about the degrees of the EQP functions that may arise as the cardinalities of delta sets and as Betti numbers.

\textbf{Acknowledgments:} The authors would like to thank Mauricio Velasco for useful and encouraging discussions of some of the ideas presented here while we were preparing this paper. The first and second authors were respectively supported by internal research grants INV-2017-51-1453 and INV-2018-50-1424 from the Faculty of Sciences of the Universidad de los Andes during their work on this project.  

\section{Parametric Presburger arithmetic: logic and counting formulas}
In this section we describe the main tool we use to prove the theorems above: parametric Presburger arithmetic. To make this note as self-contained as possible, we repeat the definition of parametric Presburger formulas introduced in \cite{Woods2014} in some detail and restate the main result of \cite{BGW2017} (Theorem~\ref{BGW} below). The reader with a background in logic may skip ahead to Definition~\ref{ParamPresForm}.

The system of parametric Presburger arithmetic is a generalization, introduced by Woods (\cite{Woods2014}), of the classical theory of Presburger arithmetic, that is, the first-order logical theory of the structure $(\Z, +, <)$ . First, we recall the definition of ``classical'' (non-parametric) Presburger formulas. We will follow standard notational conventions in mathematical logic; see, for example, \cite{Marker} or any other recent textbook on logic for a more detailed discussion of the notion of a first-order formula.

\begin{definition}
\label{PresForm}
A \emph{(classical) Presburger formula} is a first-order logical formula that can be written using quantifiers ($\exists$, $\forall$), boolean operations (and, or, not), and integer linear inequalities in the variables. More formally, it is any expression $\varphi$ which can be built up in finitely many steps via the following rules:

\begin{enumerate}
\item For any integers $a_1, \ldots, a_m$ and $b$ and any variables $x_1, \ldots, x_m$, the expression $$\sum_{i=1}^m a_i x_i \leq b$$ is a Presburger formula (formulas as above are called \emph{atomic}). The $x_i$ here are selected from some countably infinite set $\{z_0, z_1, \ldots\}$ of variable symbols.

\item If $\varphi$ and $\psi$ are Presburger formulas, then the expressions $(\varphi \wedge \psi)$, $(\varphi \vee \psi)$, $(\varphi \rightarrow \psi)$, and $(\neg \varphi)$ constructed with the logical connectives $\wedge$ (``and''), $\vee$ (``or''), $\rightarrow$ (``implies'') and $\neg$ (``not'') are also Presburger formulas. (The parentheses are included here only to avoid ambiguity.)
\item If $\varphi$ is a Presburger formula and $z_i$ is any variable, then $\exists z_i \varphi$ and $\forall z_i \varphi$ are also Presburger formulas.
\end{enumerate}

We use the notation $\varphi(x_1, \ldots, x_m)$ to denote a Presburger formula whose \emph{free variables} (those which do not occur within the scope of any quantifier $\forall$ or $\exists$) are included in the set $\{x_1, \ldots, x_m\}$.

\end{definition}

\begin{definition}
\label{PresDef}
We call a set $X \subseteq \Z^m$ \emph{(classically) Presburger definable} if $S$ is the set of all tuples in $\Z^m$ which satisfy some Presburger formula $\varphi(x_1, \ldots, x_m)$ under the natural interpretation of the logical symbols and the non-logical symbol $\leq$ and considering the quantifiers $\exists z_i$ and $\forall z_i$ to range over values in $\Z$. In this case, $\varphi(x_1, \ldots, x_m)$ is said to \emph{define} the set $X$, and we write $X = \varphi(\Z^m)$.
\end{definition}

Our definition of ``Presburger formula'' differs slightly in its syntax from what the reader may find in a logic textbook, but all definitions will agree on which subsets of $\Z^m$ count as ``Presburger definable,'' and this definability is really the key concept for us.

For example, if $a_1, \ldots, a_m$ and $b$ are any integers, then the set of all $(x_1, \ldots, x_m) \in \Z^m$ satisfying the inequality $$\sum_{i=1}^m a_i x_i \leq b$$ is Presburger definable. The set of integer solutions to the equation $$\sum_{i=1}^m a_i x_i = b$$ is also Presburger definable since it is the set of all $(x_1, \ldots, x_m)$ satisfying the Presburger formula $$\left(\sum_{i=1}^m a_i x_i \leq b \right) \wedge \neg \left( \sum_{i=1}^m a_i x_i \leq b - 1\right),$$ and similarly subsets of $\Z$ defined as solutions to finite lists of equations and inequalities with coefficients in $\Z$ are Presburger definable.

An important example for us is that any numerical semigroup $S = \langle a_1, \ldots, a_k \rangle$ is Presburger definable: it is the set of all $x \in \Z$ satisfying the formula $$x \geq 0 \wedge \exists z_1 \ldots \exists z_k \, \left( z_1 \geq 0 \wedge \ldots \wedge z_k \geq 0 \wedge \sum_{i=1}^k a_i  z_i = x  \right).$$

Now we come to the system of parametric Presburger arithmetic defined by Woods in \cite{Woods2014}. This is a logical system in which we can define \emph{parameterized families} $\{ S_n \subseteq \Z^m \, : \, n \in \N\}$ of subsets of $\Z^m$; we reserve the letter $n$ for a special parameter variable, and we seek to understand how the set $S_n$ changes as $n$ grows. 

The definition of a parametric Presburger formula $\varphi_n(x_1, \ldots, x_m)$ is just like that of a classical Presburger formula except that we allow the terms $a_i$ and $b$ in the atomic formulas (clause (1)) to be polynomial functions of the parameter $n$. More precisely:

\begin{definition}
\label{ParamPresForm}
A \emph{parametric Presburger formula} is any expression $\varphi_n$ which can be built up in finitely many steps via the following rules:

\begin{enumerate}
\item For any polynomials $a_1(n), \ldots, a_m(n)$ and $b(n)$ from $\Z[n]$ and any variables $x_1, \ldots, x_m$, the expression $$\sum_{i=1}^m a_i(n) x_i \leq b(n)$$ is a parametric Presburger formula (formulas as above are called \emph{atomic}). The $x_i$ here are selected from some countably infinite set $\{z_0, z_1, \ldots\}$ of variable symbols which does \textbf{not} include $n$, a symbol reserved for the parameter.

\item If $\varphi_n$ and $\psi_n$ are parametric Presburger formulas, then the expressions $(\varphi_n \wedge \psi_n)$, $(\varphi_n \vee \psi_n)$, $(\varphi_n \rightarrow \psi_n)$, and $(\neg \varphi_n)$ constructed with the logical connectives $\wedge$ (``and''), $\vee$ (``or''), $\rightarrow$ (``implies'') and $\neg$ (``not'') are also parametric Presburger formulas.

\item If $\varphi_n$ is a parametric Presburger formula and $z_i$ is any variable symbol \textbf{except} $n$, then $\exists z_i \varphi_n$ and $\forall z_i \varphi_n$ are also parametric Presburger formulas.
\end{enumerate}

We use the notation $\varphi_n(x_1, \ldots, x_m)$ to denote a parametric Presburger formula whose \emph{free variables} (those which do not occur within the scope of any quantifier $\forall$ or $\exists$) are included in the set $\{x_1, \ldots, x_m\}$, and with the subscript $n$ to emphasize the value of the parameter.

\end{definition}

\begin{definition}
\label{ParamPresDef}
A family $\{S_n \, : \, n \in \N\}$ of subsets of $\Z^m$ is \emph{parametric Presburger definable} just in case there is some parametric Presburger formula $\varphi_n(x_1, \ldots, x_m)$ such that $S_n = \varphi_n(\Z^m)$ (that is, $S_n$ is the set defined by $\varphi_n$). The quantifiers $\forall$ and $\exists$ are interpreted to range over $\Z$ as usual.

\end{definition}

We emphasize that each member $S_n$ of a parametric Presburger family is definable in classical Presburger arithmetic. To the reader familiar with classical Presburger arithmetic, it may help to think of parametric Presburger formulas $\varphi_n(x_1, \ldots, x_m)$ as first-order formulas in an extended language with a new special variable ``$n$'' such that we are allowed to multiply terms by $n$, but we are not allowed to quantify over $n$, nor can we multiply together two terms in the standard variables $x_i$.

For example, if $S_n = \langle \textbf{f}_1(n), \ldots, \textbf{f}_k(n)\rangle$ is the parametric affine semigroup generated by functions $\textbf{f}_i : \N \rightarrow \N^m$ where $$\textbf{f}_i(n) = (f_{i,1}(n), \ldots, f_{i,m}(n))$$ for polynomial functions $f_{i,j}: \N \rightarrow \N$ with coefficients in $\Z$, then the family $\{S_n\}$ is parametric Presburger definable, since it is defined by the parametric Presburger formula $$\textbf{x} \geq \textbf{0} \wedge \exists z_1 \ldots \exists z_k \, \left( z_1 \geq 0 \wedge \ldots \wedge z_k \geq 0 \wedge \sum_{i=1}^k z_i \cdot \textbf{f}_i(n) = \textbf{x}  \right) $$ where we use the notation $\textbf{x}$ for $(x_1, \ldots, x_m)$ and ``$\textbf{x} \geq \textbf{0}$'' as shorthand for the conjunction $x_1 \geq 0 \wedge \ldots \wedge x_m \geq 0$.


Now we recall the main result of \cite{BGW2017}:

\begin{thm}
\label{BGW}
(\cite{BGW2017}, Theorem 1.15) Suppose that $\{S_n \, : \, n \in \N \}$ is a parametric Presburger definable family of subsets of $\Z^d$.
\begin{enumerate}
\item The set of $n$ such that $S_n \neq \emptyset$ is eventually periodic.
\item The set of $n$ such that $S_n$ is infinite is eventually periodic.
\item There is an EQP function $g: \N \rightarrow \N$ such that for every $n$ at which $S_n$ is finite, $|S_n| = g(t)$.
\item There is a function $\textbf{a} : \N \rightarrow \Z^m$ whose coordinates are EQP and such that at every $n$ for which $S_n \neq \emptyset$, we have $\textbf{a}(n) \in S_n$.
\end{enumerate}
\end{thm}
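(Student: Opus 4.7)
The plan is to reduce everything to a quantifier-elimination (QE) result for parametric Presburger formulas, and then invoke a parametric lattice-point counting theorem. First I would establish that any parametric Presburger formula $\varphi_n(\mathbf{x})$ is equivalent, for all sufficiently large $n$ and on each congruence class $n\equiv r \pmod{M}$ for a suitable period $M$, to a quantifier-free formula in an extended language with parametric divisibility predicates of the form $d(n)\mid \sum a_i(n)x_i + c(n)$ with coefficients in $\Z[n]$. I would carry out the elimination one existential at a time, Cooper-style: to remove $\exists z_i$ from a conjunction of atomic inequalities and divisibilities, isolate $z_i$ on one side, then rewrite the existence of an integer $z_i$ as a disjunction over residues modulo the $\lcm$ of its coefficients together with upper/lower bound comparisons. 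The new wrinkle versus classical Presburger is that the coefficients are now polynomials in $n$, so the divisors produced are polynomials and one must split on the signs of their leading coefficients; for large $n$ these signs stabilize, which is the source of the word ``eventually'' in every conclusion.

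Given such a QE, parts (1) and (2) become essentially bookkeeping. The condition $S_n=\emptyset$ translates to a sentence that is a Boolean combination of polynomial inequalities and parametric divisibilities in the single variable $n$; on each residue class modulo $M\cdot\lcm_i d_i(n)$ and for large $n$, signs of polynomials are constant and divisibilities $d(n)\mid c(n)$ become periodic, so the set of $n$ where the sentence holds is eventually periodic. Infiniteness is handled similarly: $S_n$ is infinite iff some coordinate projection of $S_n$ is unbounded, and boundedness is itself expressible by a parametric Presburger sentence, hence also eventually periodic.

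For part (3), I would invoke a parametric Ehrhart-type count. On each residue class of $n$, the QE normal form realizes $S_n$ as a Boolean combination of parametric polyhedra whose facet-defining inequalities have coefficients polynomial in $n$, intersected with congruence conditions modulo polynomials in $n$. A suitable extension of the Clauss--Loechner theorem to polynomial (rather than linear) parameter dependence, combined with inclusion-exclusion to reduce Boolean combinations to simple polytopes, yields that $|S_n|$ is EQP. An alternative inductive route is to stratify $S_n$ by the value of $x_1$, apply the inductive hypothesis in one fewer free variable, and use closure of the EQP class under finite summation over an EQP-indexed range (which itself follows from a parametric Faulhaber-type identity). For part (4), I would define $\mathbf{a}(n)$ as the lexicographically least element of $S_n\cap[-B(n),B(n)]^m$, where $B(n)$ is an EQP bound extracted from the atomic data of $\varphi_n$ and guaranteed large enough to meet $S_n$ whenever $S_n\neq\emptyset$. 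Each coordinate of the lex-minimum is the minimum of a parametric Presburger set in one variable, and such a minimum is itself EQP by applying part (3) to the auxiliary formula asserting the existence of the $k$-th element from below.

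The hard part will be the QE together with the parametric Ehrhart step. Specifically, controlling the divisor polynomials that proliferate during elimination (so that, after restricting to residue classes, they behave periodically) and pushing standard lattice-point counting from the linear-parameter setting to arbitrary polynomial dependence on $n$ constitute the technical core; once those are in place, parts (1), (2), and (4) fall out by routine application of the normal form.
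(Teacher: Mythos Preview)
The paper does not contain a proof of this theorem; it is merely quoted from \cite{BGW2017} as the main black-box tool used in the rest of the article. So there is no ``paper's own proof'' against which to compare your proposal.

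That said, your sketch is broadly in the spirit of how the result is actually established in \cite{BGW2017}: a quantifier-elimination step reducing an arbitrary parametric Presburger formula to a normal form built from polynomial-in-$n$ linear inequalities and divisibility constraints, followed by a parametric lattice-point enumeration to obtain the EQP counts. Your outline of parts (1), (2), and (4) as routine consequences of the normal form is accurate. Where your account is thinnest is exactly where the real work lies: the Cooper-style elimination with polynomial coefficients does not directly produce divisibility predicates of the form $d(n)\mid\cdots$ with polynomial modulus that one can handle by ``restricting to residue classes,'' since a polynomial modulus is not periodic in $n$. The actual argument has to work harder to keep moduli bounded (or to reduce to honestly integer moduli after a base-$n$-style change of variables), and the parametric Ehrhart step likewise requires more than an appeal to Clauss--Loechner, which treats affine-linear parameters. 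Your proposal correctly identifies these as the hard parts, but as written it asserts rather than supplies the mechanisms that make them go through.
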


\section{Defining factorization invariants in parametric Presburger arithmetic}

In this section, we recall the definition of various frequently studied concepts in semigroup theory, especially factorization invariants for numerical and affine semigroups, and along the way we show how they may be defined in parametric Presburger arithmetic. Combined with Theorem~\ref{BGW}, this will establish Theorem~\ref{main_numerical}.

Let $S_n = \langle f_1(n), \ldots, f_k(n) \rangle$ be a sub-semigroup of $\N$ generated by polynomials $f_i(n)$ with integer coefficients such that $f_i$ maps $\N$ into $\N$. In general, this will not yield a numerical semigroup if the greatest common divisor $d(S_n)$ of $f_1(n), \ldots, f_k(n)$ is greater than $1$. 

Observe that $x = d(S_n)$ may be expressed by the parametric Presburger formula $$ x > 0 \wedge \exists z_1, \ldots, z_k \left[ x = \sum_{i=1}^k f_i(n) \cdot z_i \wedge \forall y \left( \exists z'_1, \ldots, z'_k \left[ y = \sum_{i=1}^k f_i(n) \cdot z'_i \wedge y > 0 \right] \rightarrow x \leq y \right) \right]$$ expressing that $x$ is the least positive $\Z$-linear combination of the generating set. Thus the function sending $n$ to $d(S_n)$ is EQP, and the set of $n$ for which $S_n$ is a numerical semigroup is eventually periodic, establishing (1) (a) of Theorem~\ref{main_numerical}.

For the rest of this section, we will consider only the (eventually periodic set of) values of $n$ for which $S_n$ is a numerical semigroup. Perhaps the simplest invariant of $S_n$ to define in parametric Presburger arithmetic is the \emph{genus} $\textup{g}(S_n) := |\N \setminus S_n|$. Using the fact mentioned in the previous section that $S_n$ itself is parametric Presburger definable, it follows that its complement in $\N$ is parametric Presburger definable as well, giving us (2) (a) of Theorem~\ref{main_numerical}.

Recall that the \emph{Frobenius number} $\textup{F}(S_n)$ is the maximum of $\N \setminus S_n$, so $x = \textup{F}(S_n)$ may be expressed by the parametric Presburger formula
$$x \notin S_n \wedge \forall y \left( x < y \rightarrow y \in S_n \right)$$
(using the fact mentioned in the previous section that $S_n$ itself is parametric Presburger definable), and so $\textup{F}$ is EQP, establishing (2) (b) of Theorem~\ref{main_numerical}. This was previously proved by Bobby Shen \cite{Shen15a} using a different method.


The set $PF(S_n)$ of \emph{pseudo-Frobenius numbers} of $S_n$ is the set of all $x \in \Z$ satisfying the Presburger formula $$x \notin S_n \wedge \forall y \in \Z \left[ (y \in S_n \wedge y \neq 0) \rightarrow x+y \in S_n \right],$$ and the cardinality of $PF(S_n)$ (which is always finite) is known as the \emph{type} of $S_n$. Thus the type of $S_n$ is given by an EQP function, yielding (2) (c) of Theorem~\ref{main_numerical}.

The numerical semigroup $S$ is called \emph{symmetric} if $$\forall z \in \Z \left[ z \notin S \rightarrow \textup{F}(S) - z \in S \right].$$ Since the Frobenius number $\textup{F}(S_n)$ is parametric Presburger definable, it is immediate from this definition that there is a parametric Presburger formula expressing that ``$S_n$ is a symmetric numerical semigroup'' and so the set of $n$ for which this holds is eventually periodic (part (1) (b) of Theorem~\ref{main_numerical}).

The numerical semigroup $S$ is \emph{irreducible} if it is not the intersection of two strictly larger numerical semigroups.

\begin{fact}
\cite{problemlist}
A numerical semigroup $S$ is irreducible if and only if one of the two conditions below holds:
\begin{enumerate}
\item $\textup{F}(S)$ is odd and $S$ is symmetric; or
\item $\textup{F}(S)$ is even and $S$ is \emph{pseudo-symmetric:} that is, for any $x \in \Z \setminus S$, either $\textup{F}(S) - x \in S$ or $x = \frac{\textup{F}(S)}{2}$.
\end{enumerate}
\end{fact}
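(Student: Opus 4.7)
The plan is to go through an intermediate characterization of irreducibility as maximality under inclusion, and then connect that to the symmetric and pseudo-symmetric conditions by a genus-counting argument based on the involution $z \mapsto F - z$, where $F = \textup{F}(S)$.

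First I would establish the equivalence: a numerical semigroup $S$ is irreducible if and only if $S$ is maximal (under inclusion) among numerical semigroups having Frobenius number $F$. The easy direction: if $S$ is maximal and $S = S_1 \cap S_2$ with each $S_i \supsetneq S$ numerical, then $F \notin S$ forces $F \notin S_i$ for some $i$, say $i = 1$; since $S \subseteq S_1$ and $\N \setminus S$ is finite, $S_1$ is numerical with $\textup{F}(S_1) \leq F$, and combined with $F \notin S_1$ this gives $\textup{F}(S_1) = F$, contradicting maximality. The harder direction: starting from a proper super-semigroup $S' \supsetneq S$ with $\textup{F}(S') = F$, one must exhibit two proper super-semigroups of $S$ whose intersection is $S$. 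The key observation is that for any $x \in S' \setminus S$ one automatically has $F - x \notin S'$ (else $F = x + (F - x)$ would lie in $S'$), so the involution $z \mapsto F - z$ splits the gaps of $S$ naturally into two complementary parts, and one uses this to build the required pair $S_1, S_2$.

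Next I would execute the genus-counting argument. For each $0 < x < F$ with $x \neq F - x$, at least one of $\{x, F - x\}$ is a gap of $S$, because otherwise $F = x + (F - x) \in S$. Together with the gap at $F$ itself and, when $F$ is even, the forced gap at $F/2$ (since $2 \cdot F/2 = F \notin S$), this yields the lower bounds $\textup{g}(S) \geq (F+1)/2$ for odd $F$ and $\textup{g}(S) \geq F/2 + 1$ for even $F$. Equality holds precisely when each pair $\{x, F - x\}$ contributes exactly one gap, which is the symmetric condition in the odd case and the pseudo-symmetric condition in the even case.

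Finally I would show that maximal numerical semigroups with Frobenius $F$ coincide with those of minimum genus. One direction is immediate: any proper super-semigroup with the same Frobenius has strictly smaller genus, so minimum-genus implies maximal. For the converse, if $S$ has strictly more than the minimum genus, some pair $\{x, F - x\}$ consists of two gaps with $x \neq F/2$, and I would choose such an $x$ for which $F - kx \notin S$ for every $k \geq 1$ with $F - kx \geq 0$; then $\langle S, x \rangle$ is a strictly larger numerical semigroup still having Frobenius $F$, contradicting maximality. The hard part throughout will be the two ``construction'' steps (assembling the pair $S_1, S_2$ whose intersection is $S$, and producing the enlarged numerical semigroup that preserves $F$ as the Frobenius number); both rely on using the involution $z \mapsto F - z$ to control, via the pairing of gaps with elements of $S$, which sums can and cannot land on $F$.
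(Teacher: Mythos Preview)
The paper does not prove this statement. It is stated as a \textbf{Fact} and attributed to \cite{problemlist}; the authors invoke it only to deduce that ``$S_n$ is irreducible'' can be expressed by a parametric Presburger formula, and they supply no argument for the Fact itself. Consequently there is no ``paper's own proof'' against which your proposal can be compared.

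That said, a brief comment on your sketch. The overall architecture --- (i) irreducible $\Leftrightarrow$ maximal among numerical semigroups with Frobenius number $F$, (ii) the genus lower bounds via the pairing $z \mapsto F - z$, (iii) maximal $\Leftrightarrow$ minimal genus $\Leftrightarrow$ symmetric or pseudo-symmetric --- is the standard route (essentially the one in Rosales and Garc\'ia-S\'anchez, \emph{Numerical Semigroups}, Chapter~3). Your genus-counting step is correct as written. The two places you flag as ``hard'' are indeed where the work lies, and your descriptions of them are a bit loose. For the step ``not minimal genus $\Rightarrow$ not maximal,'' the clean choice is to take $h$ to be the \emph{largest} gap of $S$ with $h \neq F$ and $2h \neq F$; then $S \cup \{h\}$ is already a numerical semigroup with Frobenius number $F$ (one checks that $h + s$ cannot be a gap for nonzero $s \in S$, using maximality of $h$ together with $F - h \notin S$). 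Your alternative selection criterion (``$F - kx \notin S$ for every $k \geq 1$'') is not obviously sufficient and would need justification. For the step ``not maximal $\Rightarrow$ reducible,'' rather than building a specific pair $S_1, S_2$ from a single $S' \supsetneq S$, the usual argument simply intersects \emph{all} numerical oversemigroups of $S$ with Frobenius number $F$; if $S$ is not maximal there are at least two distinct maximal ones above it, and their intersection already strictly contains $S$ unless $S$ equals one of them.
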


With this characterization of irreducibility, we now leave it as an exercise to the reader to verify that ``$S_n$ is irreducible'' may be expressed by a parametric Presburger formula.  (Hint: the statement ``$n$ is even'' may be expressed by the parametric Presburger formula $\exists z ( 2 z = n )$, which allows one to break into cases according to the parity of $n$.) Hence the set of all $n$ for which $S_n$ is irreducible is eventually periodic (part (1) (c) of Theorem~\ref{main_numerical}).

A \emph{fundamental gap} of a numerical semigroup $S$ is an integer $x$ such that $x \notin S$ but $k x \in S$ for every integer $k \geq 2$ (see \cite{Assi_GS}).
This is equivalent to $2x$ and $3x$ belonging to $S$, so the parametric Presburger formula $$x \notin S_n \wedge 2 x \in S_n \wedge 3x \in S_n$$ defines the set $\textup{FG}(S_n)$ of fundamental gaps of $S_n$. Thus its cardinality (which is always finite if $S_n$ is a numerical semigroup) is an EQP function of $n$ (part (2) (d) of Theorem~\ref{main_numerical}).

Next we define length sets and delta sets. The \emph{length set} $\textup{L}_{S_n}(m)$ of $m \in \N$ may be defined by the condition that $x \in \textup{L}_{S_n}(m)$ if and only if $$\exists z_1, \ldots, z_k \left[m = \sum_{i=1}^k f_i(n) \cdot z_i \wedge z_1 \geq 0 \wedge \ldots \wedge z_k \geq 0 \wedge x = \sum_{i=1}^k z_i \right],$$ that is, \emph{$\textup{L}_{S_n}(m)$ is the set of all sums of coefficients $z_i$ occurring in factorizations of $m$}. If we list the elements of the length set $\textup{L}_{S_n}(m) = \{\ell_1, \ldots, \ell_r\}$ in increasing order, then the delta set $\Delta_{S_n}(m)$ is the set of all successive differences $\ell_{i+1} - \ell_i$, and the condition that $x \in \Delta_{S_n}(m)$ can be defined by the parametric Presburger formula
$$\exists y_1, y_2 \left[y_1 \in \textup{L}_{S_n}(m) \wedge y_2 \in \textup{L}_{S_n}(m) \wedge y_1 < y_2 \wedge x = y_2 - y_1  \wedge \forall z\left( (z \in \textup{L}_{S_n}(m) \wedge z > y_1 ) \rightarrow y_2 \leq z \right) \right] $$
expressing that $x$ is a difference $y_2 - y_1$ of elements $y_i \in \textup{L}_{S_n}(m) $ such that there is no third element of the length set between them. Finally, the delta set of $S_n$, $$\Delta (S_n) := \bigcup_{m \in S_n} \Delta_{S_n} (m),$$ is also parametric Presburger definable, since $x \in \Delta(S_n)$ is equivalent to
$$\exists y \left[ y \in S_n \wedge x \in \Delta_{S_n}(y) \right].$$
From the preceding discussion we immediately conclude that if $a(n) \in \N$ is given by a polynomial function, then the cardinalities of the length and delta sets ($\textup{L}_{S_n}(a(n)$ and $\Delta_{S_n}(a(n))$ respectively) are EQP (Parts (3) (a) and (3) (b) of Theorem~\ref{main_numerical}), and that the cardinality of the delta set $\Delta(S_n)$ is EQP (Theorem~\ref{main_numerical} part (2) (e)). 

The \emph{Ap\'ery set} of $x \in S_n$ is defined as $$\textup{Ap}(S_n, x) = \{ y \in S_n \, : \, y - x \notin S_n \},$$ and the Ap\'ery set of $S_n$ itself is $\textup{Ap}(S_n) = \textup{Ap}(S_n, m(n))$ where $m(n)$ is the minimal positive element of $S_n$. Note that $|\textup{Ap}(S_n, x)| = x$ since elements of the Ap\'ery set correspond to minimal positive elements of each congruence class modulo $x$ which lie in $S_n$. Now it is simple to define when $x = m(n)$ via the parametric Presburger formula $$x \in S_n \wedge x > 0 \wedge \forall y [ (y \in S_n \wedge y > 0) \rightarrow x \leq y ],$$ and from this we can characterize when $x \in \textup{Ap}(S_n)$ with the parametric Presburger formula $$x \in S_n \wedge x - m(S_n) \notin S_n.$$ If we fix some $i \geq 1$, then $x$ is the $i$-th element of the Ap\'ery set of $S_n$ in increasing order if and only if it satisfies the parametric Presburger formula expressing ``$x$ is in $\textup{Ap}(S_n)$, and there are $i-1$ pairwise distinct elements of $\textup{Ap}(S_n)$ below $x$, and it is not the case that there are $i$ pairwise distinct elements of $\textup{Ap}(S_n)$ below $x$.'' From this it follows that the formula for the $i$-th element of $\textup{Ap}(S_n)$ is EQP (Part (2) (f) of Theorem~\ref{main_numerical}).

\section{Betti numbers of affine semigroup algebras}
Any affine semigroup $\langle \avec_1, \ldots, \avec_k \rangle$ (thus in particular, any numerical semigroup) is isomorphic to a quotient of a free commutative semigroup in the following sense: let $F_k := \langle e_1, \ldots, e_k \rangle$ be the free commutative semigroup with $k$ generators, and define a \emph{congruence relation} to be an equivalence relation $\sim$ on $F_k$ with the property that whenever $x \sim y$ then $x+z \sim y+z$. For such a congruence relation, we can naturally form the quotient semigroup $F_k / \sim$. Any numerical semigroup with $k$ generators is isomorphic to such a quotient $F_k / \sim$ in which moreover $\sim$ is generated by a finite number of relations $a_1 \sim b_1, \ldots, a_r \sim b_r$ (see \cite[Corollary 1.6]{Herzog}). That is, $\sim$ is the intersection of all congruence relations on $F_k$ which contain every relation $a_i \sim b_i$. These relations which generate $\sim$ constitute a \emph{presentation of $S$}, and if $r$ is the minimum possible over all possible such finite generating sets then this is a \emph{minimal presentation of $S$}.

Corollary \ref{presentation} states that the cardinality of the size of a minimal presentation for $S_n$ is an EQP function of $n$. In fact this number is equal to the first Betti number $\beta_1(K[S_n])$ of the semigroup algebra $K[S_n]$ over any field $K$, and the EQP behavior of this function follows from the more general result (Theorem \ref{main_affine}) that any Betti number $\beta_i(K[S_n])$ is EQP, which we will now prove.  The Betti numbers are more complicated than the invariants discussed in Section 3 and do not correspond to parametric Presburger definable sets in an obvious way, but a characterization of Betti numbers by Bruns and Herzog will help us.

For the rest of this section, we work in the context of \emph{parametric affine semigroups}: that is, we assume that $S_n$ is a sub-semigroup of $\N^m$ for some $m$ generated by $\textbf{f}_1(n), \ldots, \textbf{f}_k(n)$, where each $\textbf{f}_i$ is a function from $\N$ to $\N^m$ given coordinate-wise by polynomials with integer coefficients.

We briefly recall the usual way of defining Betti numbers. 
Let $K$ be any field, and let $R = K[x_1, \ldots, x_k]$ be the polynomial ring in $k$ indeterminates. There is a natural $K$-algebra map $\varphi: R \rightarrow K[y_1, \ldots, y_m]$ which sends each $x_i$ to $\textbf{y}^{\textbf{f}_i(n)}$, that is, the monomial in variables $y_1, \ldots, y_m$ such that the exponent of $y_j$ is the $j$-th coordinate of $\textbf{f}_i(n)$. The ring $K[y_1, \ldots, y_m]$ is given its standard multigrading (so degrees are elements of $\Z^m$), and the ring $R$ is given the nonstandard multigrading under which $\deg(x_i) = \deg(\textbf{y}^{\textbf{f}_i(n)})$ so that $\varphi$ is a homogeneous degree-zero map. The image of $\varphi$ is the \emph{affine semigroup algebra} $K[S_n]$ and its kernel is the \emph{toric ideal} $I_{S_n}$. 
 
 Under this multigrading of $R$, we now take a minimal free multigraded resolution $\mathbf{F}$ of $R$, that is, a chain complex $$\mathbf{F} \, : \, 0 \rightarrow F_{k-1} \rightarrow F_{k-2} \rightarrow \ldots \rightarrow F_1 \rightarrow F_0 = R \rightarrow 0$$ whose maps are homogeneous of degree $0$ and which is exact in positive degrees. Then the $i$-th Betti number $\beta_i(K[S_n])$ is the rank of $F_i$, which turns out to be independent of the choice of minimal resolution (see \cite[\S 1]{Eisenbud}). Note that $\beta_0(K[S_n])$ is always $1$ and $\beta_1(K[S_n])$ is the minimal number of generators for $I_{S_n}$, so these quantities are independent of the choice of $K$, but the higher Betti numbers do sometimes depend on the characteristic of $K$ (see \cite{BrunsHerzog}). 

The difficulty of directly applying the tool used in the previous section is that generally in first-order logic we have no way of expressing a property such as ``$x_1, \ldots, x_r$ constitute a minimal generating set for the ideal $I$'' unless it happens that we know an \emph{a priori} bound on the size $r$ of a minimal set of generators. However, even in the case of a parametric numerical semigroup generated by four quadratic polynomials, the size of the first Betti number may grow without bound as $n$ increases, as shown by a family of examples discovered by Bresinsky \cite{Bresinsky} which we will discuss in Section 5.  

To show that the Betti numbers $\beta_i(K[S_n])$ are EQP (fixing $i$, letting $n$ vary), we use a topological characterization by Bruns and Herzog. For any $\lambda \in S_n$, let the \emph{squarefree divisor complex} $\Delta_n(\lambda)$ be the simplicial complex with vertices $\{1, \ldots, k\}$ corresponding to the generators of $S_n$, and whose faces are the subsets $\{i_1, \ldots, i_r\}$ such that $\lambda - f_{i_1}(n) - \ldots - f_{i_r}(n) \in S_n$. We will compute the \emph{reduced} homologies of such complexes, so that $\dim_K \widetilde{H}_0(\Delta_n(\lambda); K)$ is the number of connected components of $\Delta_n(\lambda)$ minus $1$. Now by a theorem of Bruns and Herzog (\cite{BrunsHerzog}), the \emph{graded} $i$-th Betti number $\beta_{i,\lambda}(K[S_n])$ satisfies 

\begin{equation}
\label{BH}
\beta_{i,\lambda}(K[S_n]) = \dim_K \widetilde{H}_{i-1}(\Delta_n(\lambda); K),
\end{equation}

and the relation between these and the ungraded (``coarse'') Betti numbers is 
\begin{equation}
\label{coarse}
\beta_i(K[S_n]) = \sum_{\lambda \in S_n} \beta_{\lambda, i}(K[S_n]).
\end{equation}

\begin{proof}[Proof of Theorem~\ref{main_affine}]
Let $\Delta^1, \ldots, \Delta^N$ list all of the different simplicial complexes with vertex set $\{1, \ldots, k\}$ (so $N$ is no more than $2^{2^k}$). For any $j \in \{1, \ldots, N\}$, there is a parametric Presburger formula $\varphi^n_j(\textbf{x})$ such that for any $x \in \N$, $\varphi^n_j(\textbf{x})$ is true if and only if the squarefree divisor complex $\Delta_n(\textbf{x})$ is $\Delta^j$. That is, $\varphi^n_j(\textbf{x})$ is simply a conjunction over all possible subsets $\{i_1, \ldots, i_r\}$ of $[k]$ expressing that either $$\textbf{x} - f_{i_1}(n) - \ldots - f_{i_r}(n) \in S_n$$ or $$\textbf{x} - f_{i_1}(n) - \ldots - f_{i_r}(n) \notin S_n,$$ depending on whether $\{i_1, \ldots, i_r\}$ is a face in $\Delta^j$.

For any fixed $i, \ell \in \N$, let $$\Gamma_{i,\ell} = \{ j \in [N] \, : \,  \dim_K \widetilde{H}_{i-1}(\Delta^j; K) = \ell\}.$$ Now if $j \in \Gamma_{i, \ell}$ and $\ell > 0$, then for any $n$ the set $$\varphi^n_j(\N^m) := \{\textbf{x} \in \N^m \, : \, \varphi^n_j(\textbf{x}) \textup{ is true }\}$$ must be finite (since the $i$-th Betti number is finite!), and so for such a $j$, the quantity $| \varphi^n_j(\N^m) |$ is EQP as a function of $n$.

Hence by Equations~(\ref{BH}) and (\ref{coarse}) above, $$\beta_i(K[S_n]) = \sum_{\lambda \in S_n} \dim_K \widetilde{H}_{i-1}(\Delta_n(\lambda); K) = \sum_{\ell > 0} \sum_{j \in \Gamma_{i,\ell}} \ell \cdot | \varphi^n_j(\N^m) |, $$ which is a finite sum of EQP functions and hence is EQP.
\end{proof}

\section{Degrees of parametric semigroup invariants}
Theorems \ref{main_numerical} and \ref{main_affine} state that many invariants are eventually quasi-polynomial, but do not give any bounds on these EQP functions. In general it is difficult to extract degree bounds whenever we prove results via parametric Presburger arithmetic and Theorem~\ref{BGW}. However, some degree bounds are known or can be easily obtained from results in the literature.

For instance, in Remark 5.10 of \cite{KO}, Kerstetter and O'Neill establish the eventual periodicity of the first Betti number $\beta_1(S_n)$ of a numerical semigroup in the case where the generators $f_i(n)$ are linear functions of $n$. That is, they show for such parametric families that $\beta_1(S_n)$ is EQP \emph{of degree zero}. Our conclusion is weaker because we have no degree bound, but our context is much more general (affine semigroups instead of numerical semigroups, the generators of the semigroup can be given by arbitrary polynomials, and it applies to Betti numbers of any homological degree).

Similarly, our EQP result for the cardinality of the delta set (Theorem~\ref{main_numerical} part (2) (e)) is both more general, and weaker, than similar results obtained by Chapman, Kaplan, Lemburg, Niles, and Zlogar in \cite{CKLNZ}: they consider only a special case of shifted monoids (in which $f_i(n)$ has the form $n + a_i$), but they prove that in this case $|\Delta(S_n)|$ is eventually the constant value of $1$. 

Returning to Betti numbers of parametric numerical semigroups, we note that it is  possible to obtain an upper bound on the degree of the EQP functions for the Betti numbers by an elementary argument:


\begin{prop}
Let $S_n = \langle \fvec_1(n), \dots, \fvec_k(n) \rangle$ be a parametric numerical semigroup. Then for each $i$, the degree of the EQP function for $\beta_i(K[S_n])$ is at most $\sum_j \deg(f_j)$. 
\end{prop}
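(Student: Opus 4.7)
The plan is to combine the Bruns--Herzog formula used in the proof of Theorem~\ref{main_affine} with an elementary Ap\'ery set count. Recall that
$$\beta_i(K[S_n]) = \sum_{\lambda \in S_n} \dim_K \widetilde{H}_{i-1}(\Delta_n(\lambda); K),$$
and each summand is trivially at most the number of $(i-1)$-faces in the full simplex on $k$ vertices, namely $\binom{k}{i}$. So it suffices to bound the number of $\lambda \in S_n$ for which $\Delta_n(\lambda)$ has non-trivial reduced homology.

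The key observation is that if $\lambda - \sum_{j=1}^k f_j(n) \in S_n$, then for every $I \subseteq [k]$,
$$\lambda - \sum_{j \in I} f_j(n) = \left(\lambda - \sum_{j=1}^k f_j(n)\right) + \sum_{j \notin I} f_j(n) \in S_n,$$
so every subset of $[k]$ is a face of $\Delta_n(\lambda)$ and the complex is the full $(k-1)$-simplex, hence contractible. Thus only $\lambda$ lying in the Ap\'ery set $\textup{Ap}\!\left(S_n,\, \sum_j f_j(n)\right)$ can contribute to $\beta_i(K[S_n])$. Since $\sum_j f_j(n)$ is itself an element of $S_n$, the standard identity $|\textup{Ap}(S,x)| = x$ for a numerical semigroup $S$ and a positive $x \in S$ gives exactly $\sum_j f_j(n)$ relevant multidegrees. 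Combining these ingredients,
$$\beta_i(K[S_n]) \leq \binom{k}{i} \sum_{j=1}^k f_j(n),$$
which is a polynomial in $n$ of degree $\max_j \deg(f_j) \leq \sum_j \deg(f_j)$.

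To finish, I would invoke Theorem~\ref{main_affine} to conclude that $\beta_i(K[S_n])$ is EQP, and then observe that any EQP function dominated by a polynomial of degree $D$ must have EQP degree at most $D$; otherwise its eventually periodic leading coefficient $c_d(n)$, being non-zero for infinitely many $n$, would produce values $|c_d(n)|\, n^d$ along that subsequence exceeding any $O(n^D)$ bound. I do not anticipate any serious obstacle beyond needing to restrict attention to the eventually periodic set of $n$ for which $S_n$ is genuinely a numerical semigroup (Theorem~\ref{main_numerical}(1)(a)), so that the Ap\'ery-set identity applies. Incidentally, the argument above produces the slightly sharper bound $\max_j \deg(f_j)$.
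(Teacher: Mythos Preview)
Your argument is correct and takes a genuinely different route from the paper's. The paper bounds the set of contributing $\lambda$ via the Frobenius number: it observes that once $\lambda > \textup{F}(S_n) + i \cdot \max_j f_j(n)$ the relevant skeleton of $\Delta_n(\lambda)$ is that of the full simplex, and then invokes the crude estimate $\textup{F}(S_n) \leq \prod_j f_j(n)$, whose degree is $\sum_j \deg(f_j)$. Your Ap\'ery-set argument with respect to $\sum_j f_j(n)$ is both cleaner---you obtain the \emph{entire} simplex at once, bypassing any skeleton-versus-homology discussion---and strictly sharper, since $\sum_j f_j(n)$ has degree $\max_j \deg(f_j)$ rather than $\sum_j \deg(f_j)$. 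The paper's approach has the minor expository virtue of tying the bound to the Frobenius number, but your route yields a better conclusion with less work; your closing remarks about restricting to the eventually periodic set of $n$ where $S_n$ is numerical, and about why an EQP function bounded by $O(n^D)$ has EQP degree at most $D$, are exactly the right caveats.
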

\begin{proof}
   For the Frobenius number $\textup{F}(f_1(n), \ldots, f_n(n))$ of a parametric numerical semigroup, we can give the obvious crude upper bound of $\prod_j f_j(n)$, which has degree $\sum_j \deg(f_j)$ Now using the Bruns-Herzog homological characterization of Betti numbers, one can reason that if $\lambda > \textup{F}(S_n) + i \cdot \max(\fvec_1(n), \ldots, \fvec_k(n))$, then after subtracting any $i$ of the semigroup generators $\fvec_1(n), \ldots, \fvec_k(n)$ from $\lambda$ we must still have an element of $S_n$, hence $\Delta_n(\lambda)$ is the full $k-1$-simplex and the graded Betti number $\beta_{i, \lambda}$ is $0$. Therefore the degree of the EQP function for $\beta_i(K[S_n])$ is no greater than $\sum_j \deg(f_j)$.
\end{proof}

In the special case of the first Betti number, the bound can be substantially improved. In particular, in this case the degree bound is independent of the number of generators $k$.  
\begin{thm}\cite[Theorem 8.26]{Rosales_GS}
  Let $S = \langle a_1, \dots, a_k \rangle$ be a numerical semigroup where $a_1 \leq \dots \leq a_k$. Then the cardinality of a minimal generating set of $I_S$ is at most
$$ \frac{(2a_1-k+1)(k-2)}{2} + 1.$$
\end{thm}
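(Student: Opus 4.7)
The plan is to apply the Bruns--Herzog characterization used in Section~4, which gives $\beta_1(K[S]) = \sum_{\lambda \in S} \dim_K \widetilde{H}_0(\Delta(\lambda); K)$; the bound I need to establish is thus a bound on the total ``defect from connectivity'' across all the squarefree divisor complexes $\Delta(\lambda)$. The central combinatorial tool will be the Apéry set $\textup{Ap}(S, a_1) = \{w_0 = 0, w_1, \ldots, w_{a_1 - 1}\}$ of $S$ with respect to the smallest generator.

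The first observation is that every generator $a_i$ with $i \geq 2$ lies in $\textup{Ap}(S, a_1)$: otherwise $a_i - a_1 \in S$ would decompose $a_i$ nontrivially, contradicting minimality of the generating set. In particular $a_1 \geq k$ (so the factor $2a_1 - k + 1$ is positive) and exactly $a_1 - k + 1$ of the Apéry slots are occupied by non-generator elements. The next step is to reduce the global count to factorization data on the Apéry set: using the unique decomposition $\lambda = q a_1 + w_i$ with $q \geq 0$ and $w_i \in \textup{Ap}(S, a_1)$, every minimal binomial relation in $I_S$ (whose two monomials have disjoint support) can be traced to a pair of distinct factorizations of some $w_i$ in terms of $a_2, \ldots, a_k$, extended by a common power of $a_1$. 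Thus the contribution to $\beta_1$ from each $w_i$ is at most $|F(w_i)| - 1$, where $F(w_i)$ denotes the set of such Apéry-level factorizations.

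To pin down the sharp constant $\frac{(2a_1 - k + 1)(k - 2)}{2} + 1$, I would argue by induction on $k$. For the base case $k = 2$, the ideal $I_S$ is principal ($I_S = (y_1^{a_2} - y_2^{a_1})$), giving $\beta_1 = 1$, which matches $\frac{(2a_1 - 1)\cdot 0}{2} + 1 = 1$. Writing $B(k, a_1)$ for the claimed upper bound, a direct computation gives $B(k+1, a_1) - B(k, a_1) = a_1 - k + 1$, which is precisely the number of Apéry slots not yet filled by the existing generators. This strongly suggests that the induction should proceed by showing that inserting a new generator $a_{k+1}$ (occupying a previously empty Apéry slot) introduces at most $a_1 - k + 1$ fundamentally new minimal syzygies. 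The main obstacle will be making this inductive step rigorous: one needs a careful combinatorial analysis of how the factorization graphs over $\textup{Ap}(S, a_1)$ change when the new generator is added, controlling both the creation of new relations that involve $a_{k+1}$ and the possible redundancy that $a_{k+1}$ may induce among the older minimal relations.
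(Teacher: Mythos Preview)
The paper does not prove this statement at all: it is quoted verbatim as Theorem~8.26 from Rosales and Garc\'ia-S\'anchez and immediately used to deduce Corollary~\ref{B1bound}. So there is no ``paper's own proof'' to compare against; your proposal stands or falls on its own.

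On its own, the proposal is a reasonable outline with the right ingredients (Bruns--Herzog, the Ap\'ery set with respect to $a_1$, the fact that $a_2,\ldots,a_k \in \textup{Ap}(S,a_1)$, the arithmetic identity $B(k+1,a_1)-B(k,a_1)=a_1-k+1$), but the inductive step is a genuine gap, not a routine detail. When you pass from $S=\langle a_1,\ldots,a_k\rangle$ to $S'=\langle a_1,\ldots,a_{k+1}\rangle$ you are changing the semigroup, so the Ap\'ery set, the squarefree divisor complexes, and the minimal presentation all change simultaneously. Showing that the net increase in $\beta_1$ is at most $a_1-k+1$ requires you to account both for the new relations involving $a_{k+1}$ and for any old relations that become redundant, and you have not indicated any mechanism for doing this. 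Your sentence ``The main obstacle will be making this inductive step rigorous'' is exactly right: that is essentially the entire content of the theorem, and nothing in the sketch suggests how to carry it out.

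There is also a subtler issue in the middle paragraph. The claim that each $w_i \in \textup{Ap}(S,a_1)$ contributes at most $|F(w_i)|-1$ to $\beta_1$ is not justified as stated: minimal relations are indexed by degrees $\lambda$ where $\Delta(\lambda)$ is disconnected, and such a $\lambda$ need not lie in the Ap\'ery set (indeed, one side of the relation may involve $a_1$). You need a sharper statement relating the graded Betti numbers at all degrees to the factorization structure of the Ap\'ery elements, and that statement (essentially Rosales--Garc\'ia-S\'anchez, Chapter~8) is precisely what the cited theorem is packaging.
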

\begin{cor} \label{B1bound}
Let $S_n = \langle f_1(n), \dots, f_k(n) \rangle$ be a parametric numerical semigroup where $\deg(f_1) \leq \deg(f_2) \leq \dots \leq \deg(f_k)$. Then the degree of the EQP function for $\beta_1(K[S_n])$ is at most $\deg(f_1)$. 
\end{cor}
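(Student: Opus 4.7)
The plan is to combine the cited bound of Rosales--García Sánchez (Theorem 8.26) with the fact that $\beta_1(K[S_n])$ has already been shown, via Theorem~\ref{main_affine}, to be an EQP function of $n$. The inequality gives us a polynomial upper bound, while the EQP structure promotes an upper bound to a degree bound.

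First I would restrict attention to the (eventually periodic, by Theorem~\ref{main_numerical}(1)(a)) set of values of $n$ for which $S_n$ is actually a numerical semigroup, since the cited theorem requires that hypothesis. For each such $n$, let $a_1(n) := \min_i f_i(n)$, the smallest generator of $S_n$. Applying Theorem 8.26 to $S_n$ with this $a_1(n)$ and with $k$ fixed yields
$$\beta_1(K[S_n]) \;\le\; \frac{(2 a_1(n) - k + 1)(k-2)}{2} + 1.$$
Because $f_1$ is one of the generators of $S_n$, we have $a_1(n) \le f_1(n)$, and since $\deg(f_1)$ is the smallest among the $\deg(f_j)$, the right-hand side is dominated by a polynomial in $n$ of degree $\deg(f_1)$ (with $k$ a constant).

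To finish, I would invoke the general observation that any EQP function $g: \N \to \Z$ which is eventually bounded above by a polynomial of degree $d$ must have EQP-degree at most $d$: if its leading periodic coefficient $c_{d'}$ with $d' > d$ were nonzero for infinitely many $n$, then on an arithmetic progression $c_{d'}$ would take a fixed nonzero value and force $|g(n)|$ to grow on the order of $n^{d'}$, contradicting the upper bound. Combined with the polynomial bound above and Theorem~\ref{main_affine}, this forces the degree of the EQP function for $\beta_1(K[S_n])$ to be at most $\deg(f_1)$.

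There is no real obstacle here: the entire content is packaged in Theorem 8.26 and in the EQP machinery already developed earlier in the paper; the corollary is essentially a direct reading of the degree of the Rosales--García Sánchez bound as a polynomial in $n$.
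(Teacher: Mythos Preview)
Your argument is correct and is exactly the intended one: the paper states the corollary immediately after the Rosales--Garc\'ia S\'anchez bound without a separate proof, relying on the fact that the bound is linear in the smallest generator (hence $O(f_1(n))$) together with the already-established EQP behavior of $\beta_1(K[S_n])$ from Theorem~\ref{main_affine}. Your explicit remark that an EQP function bounded by a polynomial of degree $d$ must have EQP-degree at most $d$ makes the implicit step precise.
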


When all of the polynomials $f_1(n), \dots, f_k(n)$ are linear, this bound is not sharp; it would say that $\beta_1(K[S_n])$ is at most eventually quasilinear (that is, EQP of degree 1) but in fact the Kerstetter and O'Neill result shows that it is quasiperiodic (that is, quasipolynomial of degree 0.) 

For semigroups $S_n$ generated by polynomials $f_i(n)$ of degree greater than $1$, no sharp bounds on the degree of $\beta_1(S_n)$ appear to be known. We recall an example of Bresinsky \cite{Bresinsky}, which in our notation is the family of semigroups $$\langle 4n^2 - 2n, \, 4n^2 - 1, \, 4n^2 + 2n, \, 4n^2 + 4n - 1 \rangle$$ generated by four quadratic polynomials. Bresinsky showed that the ideal of relations $I_{S_n}$ cannot be generated by fewer than $2n$ elements, and therefore the degree of the EQP function $\beta_1(K[S_n])$ is at least $1$ (and at most $2$).\footnote{This would appear to be a counterexample to a conjecture of Kerstetter and O'Neill in \cite{KO} that the first Betti number of a parametric numerical semigroup is always eventually periodic.}

We now show how Bresinsky's example can be generalized: for any even $d$, there is a parametric numerical semigroup $S_n$ of degree $d$ such that $\beta_1(K[S_n])$ has degree at least $d/2$.

\begin{thm}
\label{Bres_general} Fix $d \geq 2$ even. Let $S_n = \langle a_1(n), a_2(n), a_3(n), a_4(n) \rangle$ where:

$$a_1(n) = 4 n^d - 2 n^{d/2},$$
$$a_2(n) = 4 n^d - 1,$$
$$a_3(n) = 4 n^d + 2 n^{d/2},$$
$$a_4(n) = 4 n^d + 4 n^{d/2} - 1.$$

Then $\beta_1(K[S_n]) \geq 2 n^{d/2}$.
\end{thm}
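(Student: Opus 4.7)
The plan is to reduce the theorem directly to Bresinsky's original lower bound by a parameter substitution. Since $d$ is even, setting $N := n^{d/2}$ gives a non-negative integer for every $n \in \N$, and $n^d = N^2$. Under this substitution, the four generators of $S_n$ become
$$4N^2 - 2N, \quad 4N^2 - 1, \quad 4N^2 + 2N, \quad 4N^2 + 4N - 1,$$
which are precisely Bresinsky's original four generators, but with the role of his parameter now played by $N$ instead of $n$. Bresinsky \cite{Bresinsky} proved that for every positive integer $N$, the toric ideal of this semigroup requires at least $2N$ minimal binomial generators. Applying this with $N = n^{d/2}$ immediately yields $\beta_1(K[S_n]) \geq 2n^{d/2}$, which is the desired inequality.

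The only thing to verify is that Bresinsky's lower bound is genuinely uniform in the parameter. Recall that his argument exhibits an explicit list of $2N$ binomials in the toric ideal and shows that they occupy $2N$ \emph{distinct} multigraded components under the grading by the semigroup $\langle 4N^2 - 2N, 4N^2 - 1, 4N^2 + 2N, 4N^2 + 4N - 1\rangle \subseteq \N$. Because a minimal set of binomial generators can always be taken homogeneous with respect to this grading, no two of his binomials can be replaced by a single relation, so $\beta_1 = \sum_\lambda \beta_{1,\lambda}$ is at least $2N$. The substitution $n \mapsto n^{d/2}$ merely renames the parameter; it does not change the semigroup structure or the combinatorial identification of the $2N$ distinct degrees, so Bresinsky's argument transfers verbatim.

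The main (and only) obstacle is therefore bookkeeping: checking that the substitution actually lands on Bresinsky's setup and that none of the $2N$ distinct degrees he identifies accidentally collide in our re-indexed family. This is automatic since the semigroup is literally the same semigroup as in \cite{Bresinsky}, just with its parameter written as $n^{d/2}$ rather than $n$. Consequently no new analysis is needed beyond identifying the substitution and quoting Bresinsky's theorem, giving the stated bound $\beta_1(K[S_n]) \geq 2n^{d/2}$.
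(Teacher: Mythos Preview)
Your reduction is correct: the substitution $N = n^{d/2}$ turns the four generators in the theorem into exactly Bresinsky's four generators with parameter $N$, so the semigroup $S_n$ is literally Bresinsky's semigroup at the parameter value $N = n^{d/2}$, and his lower bound $\beta_1 \geq 2N$ transfers immediately. This is a genuinely different route from the paper's proof. The paper does not invoke Bresinsky's theorem as a black box; instead it gives a self-contained argument via the Bruns--Herzog characterization, explicitly producing $2n^{d/2}$ elements $f(\mu) = (\mu+1)a_1 + (2n^{d/2}-\mu)a_2 \in S_n$ and verifying by hand that each squarefree divisor complex $\Delta_{f(\mu)}$ consists of the two disjoint edges $\{1,2\}$ and $\{3,4\}$, hence is disconnected and contributes to $\beta_1$. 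Your approach is shorter and makes transparent that the ``generalization'' is only a reparametrization; the paper's approach is longer but self-contained, and it also serves as a concrete illustration of the Bruns--Herzog method developed in Section~4. One small caution: your second paragraph sketches a justification of Bresinsky's bound (distinct multigraded degrees, hence independent in any minimal generating set), which is correct in spirit, but since you are citing \cite{Bresinsky} as a black box you should either simply cite it or be precise that what is needed is $\beta_{1,\lambda} \geq 1$ at $2N$ distinct degrees $\lambda$.
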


\begin{proof}
We will use the Bruns-Herzog method described in the previous section, according to which it suffices to produce $2 n^{d/2}$ distinct elements $f(\mu) \in S_n$ (where $1 \leq \mu \leq 2 n^{d/2}$) such that the squarefree divisor complex $\Delta_{f(\mu)}$ is disconnected. In fact, for every such $\mu$, the complex $\Delta_{f(\mu)}$ consists of two disjoint $1$-faces.

For integers $\mu \in [1, 2 n^{d/2}]$, we define \begin{equation}\label{eq:f_mu}f(\mu) = (\mu + 1)\cdot a_1 + (2 n^{d/2} - \mu) \cdot a_2.\end{equation}

Direct calculation shows that 

\begin{equation}\label{eq:f_mu_2} f(\mu) = (\mu - 1) \cdot a_3 + (2 n^{d/2} - \mu) \cdot a_4, \end{equation}

so the complex $\Delta_{f(\mu)}$ contains the two $1$-faces corresponding to $\{a_1, a_2\}$ and $\{a_3, a_4\}$. In fact we will show that $\Delta_{f(\mu)}$ contains no other $1$-faces.

Let $M = 2 n^{d/2} - 1$. Then we have $a_2 = a_1 + M$ and $a_4 = a_3 + M,$ from which it follows immediately that

\begin{equation}\label{eq:M2}f(\mu+1) = f(\mu) - M.\end{equation}

Noting that $a_3 = a_2 + M + 2$, we observe that if $r \in \{1,2\}$ and $s \in \{3,4\}$ then

\begin{equation}\label{eq:M3} f(\mu) - a_r - a_s \equiv -2 \,\, (\textup{mod } M). \end{equation}

\begin{claim}
\label{coeff_bound}
If $x = \sum_i z_i \cdot a_i$ where $z_1, z_2, z_3, z_4 \in \N$ and $x \equiv -2 \, \, (\textup{mod } M),$ then $z_3 + z_4 \geq M - 1$.
\end{claim}

\begin{proof}
Since $a_1 = 4 n^d - 2 n^{d/2} = (2 n^{d/2} - 1) \cdot 2 n^{d/2}$, it is a multiple of $M$, and the respective residues of $a_1, a_2, a_3,$ and $a_4$ modulo $M$ are $0, 0, 2,$ and $2$. Since $M$ is odd, the minimal number of times we can add $2$ to itself and reach a number congruent to $-2$ modulo $M$ is $M-1$, and the Claim follows.
\end{proof}

Now our goal is to show that for any integer $\mu \in [1, 2n^{d/2}]$, $r \in \{1,2\}$, and $s \in \{3,4\}$, we have

\begin{equation}\label{eq:sr}f(\mu) - a_r - a_s \notin S_n.\end{equation} 

Once we have this, it follows from our previous discussion that $\Delta_{f(\mu)}$ is disconnected and hence the Theorem will be established.

So fix such a $\mu \in [1, 2 n^{d/2}]$, $r \in \{1,2\}$, and $s \in \{3,4\}$. By (\ref{eq:M2}), $f(\mu) \leq f(1)$, so since $a_1 < a_2$ and $a_3 < a_4$, we have
\begin{align*}
f(\mu) - a_r - a_s &\leq f(1) - a_1 - a_3 \\
 &= 0 \cdot a_3 + (2 n^{d/2} - 1) \cdot a_4 - a_1 - a_3\\
 &= M \cdot a_4 - a_1 - a_3\\
 & = M \cdot (a_3 + M) - a_1 - a_3\\
 & = (M-1) \cdot a_3 + M^2 - a_1\\
 & < (M-1) \cdot a_3.
\end{align*}
where the last inequality follows because $a_1 = M(M+1) > M^2$. 

Suppose, towards a contradiction, that $f(\mu) - a_r - a_s \in S_n$. Noting that the numbers $f(\mu),$ $a_1,$ and $a_2$ are all divisible by $M$ while $a_3$ and $a_4$ are congruent to $2$ modulo $M$, we have $f(\mu) - a_r - a_s \equiv -2 \, \, (\textup{mod } M)$, so applying Claim~\ref{coeff_bound},

\begin{equation}\label{eq:lowerbound} f(\mu) - a_r - a_s \geq (M-1) \cdot a_3. \end{equation} But this contradicts the upper bound on $f(\mu) - a_r - a_s$ which we just established.
\end{proof}

To summarize, let $g(d)$ be the maximum degree of any EQP function that can arise as $\beta_1(K[S_n])$ where $S_n$ is a parametric numerical semigroup whose generators are all polynomials of degree $d$. Combining Corollary~\ref{B1bound} with Theorem~\ref{Bres_general}, we obtain that if $d$ is even, then
$$ d/2 \leq g(d) \leq d.$$

\begin{quest}
What is the precise value of $g(d)$? 
\end{quest}


\begin{quest}
  For $i > 1$, is there an upper bound on the degree of the $i$-th Betti number of a parametric numerical semigroup which is independent of the number $k$ of generators?
\end{quest}


\begin{thebibliography}{10}

\bibitem{Assi_GS}
A.~Assi and P.~A.~Garc\'ia S\'anchez, \emph{Numerical semigroups and
  applications}, Springer-Verlag, 2016.

\bibitem{BGW2017}
Tristram Bogart, John Goodrick, and Kevin Woods, \emph{Parametric presburger
  arithmetic: logic, combinatorics, and quasi-polynomial behavior}, Discrete
  Analysis (2017), no.~4.

\bibitem{Bresinsky}
H.~Bresinsky, \emph{On prime ideals with generic zero $x_i = t^{n_1}$},
  Proceedings of the American Mathematical Society \textbf{47} (1975), no.~2,
  329--332.

\bibitem{BrunsHerzog}
W.~Bruns and J.~Herzog, \emph{Semigroup rings and simplicial complexes},
  Journal of Pure and Applied Algebra \textbf{122} (1997), no.~3, 185--208.

\bibitem{CKLNZ}
S.~Chapman, N.~Kaplan, T.~Lemburg, A.~Niles, and C.~Zlogar, \emph{Shifts of
  generators and delta sets of numerical monoids}, J. Algebra Comput.
  \textbf{24} (2014), no.~5, 655--669.

\bibitem{CGHOP}
R.~Conaway, F.~Gotti, J.~Horton, C.~O'Neill, R.~Pelayo, M.~Williams, and
  B.~Wissman, \emph{Minimal presentations of shifted numerical monoids},
  International Journal of Algebra and Computation \textbf{28} (2018), no.~1,
  53--68.

\bibitem{problemlist}
M.~Delgado, P.~A.~Garc\'ia S\'anchez, and J.~C. Rosales, \emph{Numerical
  semigroups problem list}, arXiv:1304.6552, 2013.

\bibitem{Eisenbud}
David Eisenbud, \emph{The geometry of syzygies: a second course in algebraic
  geometry and commutative algebra}, vol. 229, Springer Science \& Business
  Media, 2005.

\bibitem{Herzog}
J{\"u}rgen Herzog, \emph{Generators and relations of abelian semigroups and
  semigroup rings}, Manuscripta mathematica \textbf{3} (1970), no.~2, 175--193.

\bibitem{KO}
Franklin Kerstetter and Christopher O'Neill, \emph{On parametrized families of
  numerical semigroups},  (2019), preprint; arXiv:1909.04281v1.

\bibitem{Marker}
David Marker, \emph{Model {T}heory: {A}n {I}ntroduction}, Graduate Texts in
  Mathematics, vol. 217, Springer-Verlag, 2002.

\bibitem{Rosales_GS}
Jos{\'e}~Carlos Rosales and Pedro~A Garc{\'\i}a-S{\'a}nchez, \emph{Numerical
  semigroups}, vol.~20, Springer Science \& Business Media, 2009.

\bibitem{Shen15a}
Bobby Shen, \emph{The parametric {F}robenius problem and parametric exclusion},
  available at arXiv:1510.01349, 2015.

\bibitem{Sturmfels}
Bernd Sturmfels, \emph{Grobner bases and convex polytopes}, vol.~8, American
  Mathematical Soc., 1996.

\bibitem{Sullivant}
Seth Sullivant, \emph{Algebraic statistics}, vol. 194, American Mathematical
  Soc., 2018.

\bibitem{Woods2014}
Kevin Woods, \emph{The unreasonable ubiquitousness of quasi-polynomials}, The
  Electronic Journal of Combinatorics \textbf{21} (2014), no.~1, 1--44.

\end{thebibliography}
\end{document}